\newcommand{\field}[1]{\mathbb{#1}}
\newcommand{\Z}{\field{Z}}
\newcommand{\C}{\field{C}}
\newcommand{\N}{\field{N}}
\def\cC{\mathscr{C}}
\def\cK{\mathscr{K}}
\def\mO{\mathcal{O}}
\DeclareMathOperator{\orb}{orb}
\DeclareMathOperator{\sing}{sing}
\newcommand\mS{\mathcal{S}}
\newcommand{\om}{\omega}
\def\Proclaim #1. #2\par{\bigbreak\noindent{\sc#1.\enspace}{\it#2}\par}
\newtheorem{lem}{Lemma}[section]
\newtheorem{thm}[lem]{Theorem}
\newtheorem{rem}[lem]{Remark}
\newcommand{\be}{\begin{eqnarray}}
\newcommand{\ee}{\end{eqnarray}}
\newcommand{\ov}{\overline}
\newcommand{\wi}{\widetilde}
\newcommand{\var}{\varepsilon}
\newcommand{\comment}[1]{}
\begin{document}

\title{A remark on weighted Bergman kernels on orbifolds}



\author{Xianzhe Dai}
\address{Department of Mathematics, UCSB, CA 93106 USA}
\email{dai@math.ucsb.edu}
\author{Kefeng Liu}
\address{Center of Mathematical Science, Zhejiang University
and Department of Mathematics, UCLA, CA 90095-1555,
USA}
\email{liu@math.ucla.edu}
\author{Xiaonan Ma}
\address{Universit{\'e} Paris Diderot - Paris 7,
UFR de Math{\'e}matiques, Case 7012, Site Chevaleret, 75205 Paris
Cedex 13, France} \email{ma@math.jussieu.fr}

\begin{abstract}
 In
this note, we explain that Ross-Thomas' result \cite[Theorem
1.7]{RT09a} on the weighted Bergman kernels on orbifolds can be directly deduced
from our previous result \cite{DLM06}. This result
plays an important role in the companion paper \cite{RT09b} to prove
an orbifold version of Donaldson Theorem. 
\end{abstract}
\maketitle

In two very interesting papers \cite{RT09a, RT09b}, Ross-Thomas
describe a notion of ampleness for line bundles on K\"ahler
orbifolds with cyclic quotient singularities which is related to
embeddings in weighted projective spaces. They then apply the
results in \cite{RT09a} to prove an orbifold version of Donaldson
Theorem \cite{RT09b}. Namely, the existence of an orbifold K\"ahler
metric with constant scalar curvature implies certain stability
condition for the orbifold. In these papers, the result
\cite[Theorem 1.7]{RT09a} on the asymptotic expansion of Bergman
kernels plays a crucial role.

In this note, we explain how to directly derive Ross-Thomas' result
\cite[Theorem 1.7]{RT09a} from Dai-Liu-Ma \cite[(5.25)]{DLM06}, 
provided Ross-Thomas condition \cite[(1.8)]{RT09a} on
$c_{i}$ holds. Since in \cite[\S 5]{DLM06}, we state our results for general
symplectic orbifolds, in what follows, we will just use the version
from \cite[Theorem 5.4.11]{MM07}, where Ma-Marinescu wrote them in detail
for K\"ahler orbifolds.   We will use freely the notation in
\cite[\S 5.4]{MM07}. We assume also the auxiliary vector bundle $E$
therein is $\C$.

Let $(X,J, \om)$ be a compact $n$-dimensional K\"ahler orbifold
 with complex structure $J$, and with singular set $X_{\sing}$.
Let $(L,h^L)$ be a holomorphic Hermitian proper orbifold line bundle
on $X$. Let $\nabla ^L$ be the holomorphic Hermitian
connections on $(L,h^L)$  with curvature
$R^L=(\nabla^L)^2$.

We assume that $(L,h^L,\nabla^L)$ is a prequantum line bundle, i.e.,
\begin{align} \label{toe2.1}
 R^L =- 2 \pi\sqrt{-1}\, \om.
\end{align}


Let $g^{TX}=\om(\cdot,J\cdot)$ be the Riemannian metric on $X$
induced by $\om$. 
Let $\nabla^{TX}$ be the Levi-Civita connection on $(X, g^{TX})$.
We denote by
$R^{TX}=(\nabla^{TX})^2$ the curvature, 
by $r^{X}$ the scalar curvature of $\nabla^{TX}$.
For $x\in X$, set $d(x,X_{\sing}):=\inf_{y\in X_{\sing}} d(x,y)$
the distance from $x$ to $X_{\sing}$.

For $p\in \N$, the Bergman kernel $P_p(x,x')$ ($x,x'\in X$)
is the smooth kernel of the orthogonal projection
from $\cC^\infty(X, L^p)$ onto $H^0(X,L^p)$,
with respect to the Riemannian volume form $dv_X(x')$.

\begin{thm}[{\cite[Theorem 1.4]{DLM06}}, {\cite[Theorem 5.4.10]{MM07}}] \label{pbt4.12}
   There exist smooth coefficients
$\pmb{b}_r(x)\in \cC^{\infty}(X)$ which are polynomials in $R^{TX}$,
and its derivatives with order $\leqslant 2r-2$ at $x$ and $C_0>0$
such that for any $k,l\in \N$, there exist
$C_{k,l}>0$, $M\in \N$ with
\begin{multline}\label{pb4.20}
\Big |\frac{1}{p^n}P_p (x,x)
- \sum_{r=0}^{k} \pmb{b}_r(x) p^{-r} \Big |_{\cC^l} \\
\leqslant C_{k,l}
\Big (p^{-k-1} + p^{l/2}(1+\sqrt{p}d(x,X_{\sing}) )^M
e^{- \sqrt{C_0 p}\, d(x,X_{\sing})}\Big )\,,
\end{multline}
for any $x\in X$, $p\in \N^*$.
Moreover 
\be\label{bk2.7}
\pmb{b}_0=1, \quad \pmb{b}_1 = \frac{1}{8\pi} r^X .
\ee
\end{thm}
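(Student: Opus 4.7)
My plan is to follow the resolvent / spectral-gap approach of Dai--Liu--Ma, adapted to the orbifold setting. The analytic object controlling $P_p$ is the Kodaira Laplacian $\Box_p$ on $\cC^\infty(X,L^p)$, whose kernel is $H^0(X,L^p)$, so $P_p$ is the spectral projector onto its bottom eigenspace. The Bochner--Kodaira--Nakano formula together with the prequantum condition \eqref{toe2.1} gives a spectral gap: the non-zero spectrum of $\Box_p$ lies in $[Cp,\infty)$ for some $C>0$ and all large $p$. Inserting into the functional calculus a holomorphic function equal to $1$ at the origin and with exponential decay off of it converts this gap into the off-diagonal bound $|P_p(x,x')|_{\cC^l}\leqslant C\,p^{n+l/2}\,e^{-c\sqrt{p}\,d(x,x')}$, with the crucial rate $\sqrt{p}$. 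This localizes the study of $P_p(x,x)$ to a fixed-size neighborhood of $x$.

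The second step exploits the orbifold structure. Around $x$ fix a uniformizing chart $(\wi U,G_x)$ with $\wi U\subset\C^n$, lift $x$ to $\wi x$, and write
\begin{equation*}
P_p(x,x)=\frac{1}{|G_x|}\sum_{g\in G_x}\wi P_p(\wi x,g\cdot\wi x),
\end{equation*}
where $\wi P_p$ is the Bergman kernel pulled back to $\wi U$. For $g\neq e$ the lifted point $g\cdot\wi x$ is separated from $\wi x$ by a multiple of $d(x,X_\sing)$, so the off-diagonal decay above yields exactly the second term on the right of \eqref{pb4.20}; the polynomial prefactor $(1+\sqrt{p}\,d(x,X_\sing))^M$ comes from $l$-fold differentiation of the kernel and from uniformity of the estimates in the chart. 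For $g=e$ I would invoke the smooth Tian--Zelditch--Catlin--Lu expansion in the form of Dai--Liu--Ma: pick normal coordinates at $\wi x$, rescale $Z=\sqrt{p}\,u$, and observe that the rescaled Kodaira Laplacian converges to a model harmonic oscillator $\cL_0$ on $\C^n$ whose Bergman projection is the Bargmann--Fock kernel, equal to $1$ at the origin. Taylor-expanding the metric, connection and curvature data of $(X,g^{TX},L,h^L)$ and inverting the resulting formal perturbation series produces the universal polynomials $\pmb{b}_r(x)$ in $R^{TX}$ and its covariant derivatives of total weight $2r$; reading off the first two orders reproduces $\pmb{b}_0=1$ and $\pmb{b}_1=\frac{1}{8\pi}r^X$.

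The main obstacle, and what distinguishes \eqref{pb4.20} from its smooth counterpart, is uniformity as $x$ approaches $X_\sing$: both the injectivity radius at $x$ and the admissible size of a chart centered at $x$ degenerate there. The remedy is to work on a fixed-size equivariant chart around a nearby singular point $x_0$ rather than around $x$ itself, and then to track carefully how all Sobolev and $\cC^l$-norms of the rescaled kernel depend on $\sqrt{p}\,d(x,X_\sing)$. This polynomial-in-distance bookkeeping is exactly what produces the prefactor $(1+\sqrt{p}\,d(x,X_\sing))^M$ and fixes the value of $C_0$ in the exponential of \eqref{pb4.20}.
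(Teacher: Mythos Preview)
The paper does not contain a proof of this statement: Theorem~\ref{pbt4.12} is quoted verbatim from \cite[Theorem~1.4]{DLM06} and \cite[Theorem~5.4.10]{MM07} as background input, and the present note's only proof is that of Theorem~\ref{pbt4.13}, which \emph{uses} Theorems~\ref{pbt4.12} and~\ref{pbt4.120} rather than establishes them. There is therefore nothing in the paper to compare your proposal against.

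That said, your sketch is a fair high-level summary of the strategy actually used in \cite{DLM06} and \cite[\S5.4]{MM07}: spectral gap from Bochner--Kodaira, finite-propagation/functional-calculus localization giving the $e^{-\sqrt{C_0 p}\,d}$ off-diagonal decay, the orbifold averaging formula $P_p(x,x)=|G_x|^{-1}\sum_{g}\wi P_p(\wi x,g\wi x)$ separating the $g=e$ term (smooth expansion) from the $g\neq e$ terms (the singular remainder), and the rescaling/model-operator argument producing the universal $\pmb{b}_r$. One point to tighten: the uniformity near $X_{\sing}$ is not obtained by ``tracking how norms depend on $\sqrt{p}\,d(x,X_{\sing})$'' in a chart at $x$, but rather by running the whole rescaling analysis in a fixed $G_{x_0}$-equivariant chart around a singular point $x_0$ and observing that the off-diagonal estimate $|\wi P_p(\wi x,g\wi x)|$ already carries the factor $(1+\sqrt{p}\,|\wi x-g\wi x|)^M e^{-\sqrt{C_0 p}\,|\wi x-g\wi x|}$; since $|\wi x-g\wi x|\asymp d(x,X_{\sing})$ for $g\neq e$, the form of the right-hand side of \eqref{pb4.20} drops out directly, with no separate bookkeeping step.
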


In local coordinates, there is a more precise form
{\cite[(5.25)]{DLM06}}, see also {\cite[Theorem 5.4.11]{MM07}}. Let
$\{x_i\}_{i=1}^I\subset X_{\sing}$. For each point $x_i$ we consider 
corresponding local charts $(G_{x_i},\wi{U}_{x_i})\to U_{x_i}$ with
$\wi{U}_{x_i}\subset \C^n$, such that $0\in \wi{U}_{x_i}$ is the inverse
image of $x_i\in U_{x_i}$, and $0$ is a fixed point of the finite stabilizer group
$G_{x_i}$ at $x_i$,
which acts $\C$-linearly and effectively on $\C^n$ (cf.
\cite[Lemma 5.4.3]{MM07}). We assume moreover that
$$B^{\wi{U}_{x_i}}(0,2\varepsilon)\subset \wi{U}_{x_i},
\mbox{ and } X_{\sing}\subset W:=\cup_{i=1}^I
B^{\wi{U}_{x_i}}(0,\frac{1}{4}\varepsilon)/G_{x_i}.$$ Let
$\wi{U}_{x_i}^g$ be the fixed point set of $g\in G_{x_i}$ in
$\wi{U}_{x_i}$, and let $\wi{N}_{x_i,g}$ be the normal bundle of
$\wi{U}_{x_i}^g$ in $\wi{U}_{x_i}$. For each $g\in G_{x_i}$, the
exponential map $\wi{N}_{x_i,g,\wi{x}}\ni Y\to
\exp^{\wi{U}_{x_i}}_{\wi{x}}(Y)$ identifies a neighborhood of
$\wi{U}_{x_i}^g$ with $\wi{W}_{x_i,g}=\{Y\in \wi{N}_{x_i,g},
|Y|\leqslant \varepsilon\}$. We identify $L|_{\wi{W}_{x_i,g}}$ with
$L|_{\wi{U}_{x_i}^g}$ by using the parallel transport along the
above exponential map. Then the $g$-action on $L|_{\wi{W}_{x_i,g}}$
is the multiplication by $e^{i\theta_g}$, and $\theta_g$ is locally
constant on $\wi{U}_{x_i}^g$.


Let $\nabla^{\wi{N}_{x_i,g}}$ be the connection on $\wi{N}_{x_i,g}$
induced by the Levi-Civita connection via projection.
We trivialize $\wi{N}_{x_i,g} \simeq \wi{U}_{x_i}^g\times \C^{n_g}$
by the parallel transport along the curve
$[0,1]\ni t\to t\wi{Z}_{1,g}$ for $\wi{Z}_{1,g}\in \wi{U}_{x_i}^g$,
which identifies also the metric on $\wi{N}_{x_i,g}$ with the canonical
metric on $\C^{n_g}$.
If $\wi{Z}\in \wi{W}_{x_i,g}$,
we will write $\wi{Z}= (\wi{Z}_{1,g}, \wi{Z}_{2,g})$ with
$\wi{Z}_{1,g}\in \wi{U}_{x_i}^g$, $\wi{Z}_{2,g}\in \C^{n_g}$. We will denote by
$Z$ the corresponding point on the orbifold.

\begin{thm}[{\cite[(5.25)]{DLM06}}, {\cite[Theorem 5.4.11]{MM07}}]
\label{pbt4.120}   On $\wi{U}_{x_i}$ as above,
there exist polynomials $\cK_{r,\wi{Z}_{1,g}}(\wi{Z}_{2,g})$ in $\wi{Z}_{2,g}$
of degree $\leqslant 3r$, of the same parity as $r$, whose coefficients
are polynomials in $R^{TX}$ and its derivatives of
order $\leqslant r-2$, and a constant $C_0>0$ such that for any $k,l\in \N$,
there exist $C_{k,l}>0$, $N\in \N$ such that
\begin{multline}\label{apb4.20}
\left |\frac{1}{p^n} P_p(\wi{Z},\wi{Z})
- \sum_{r=0}^k \pmb{b}_r(\wi{Z})p^{-r}\right.\\
\left. - \sum_{r=0}^{2k}  p^{-\frac{r}{2}}
\sum_{1\neq g\in G_{x_0}} e^{i \theta_g p}
\cK_{r,\wi{Z}_{1,g}}(\sqrt{p}\wi{Z}_{2,g}) e^{-2\pi
p\langle(1-g^{-1})\wi{z}_{2,g}, \ov{\wi{z}_{2,g}}\rangle}
\right |_{\cC^{l}}\\
\leqslant C_{k,l} \left(p^{-k-1} + p^{-k+\frac{l-1}{2}}
\left(1+\sqrt{p}d (Z,X_{\sing})\right)^N
e^{- \sqrt{C_0\, p}\, d (Z,X_{\sing})}\right),
\end{multline}
for any $|\wi{Z}|\leqslant \var/2$, $p\in \N$, with $\pmb{b}_r(\wi{Z})$ as in
Theorem \ref{pbt4.12} and
$\cK_{0,\wi{Z}_{1,g}}= 1$.
\end{thm}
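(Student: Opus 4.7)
The plan is to reduce to the local picture on the uniformising chart $\wi U_{x_i}$ and then invoke the off-diagonal expansion of the Bergman kernel on the smooth covering space. Concretely, on an orbifold chart the orthogonal projection $P_p$ is the $G_{x_i}$-invariant projection, so near $x_i$ one has the standard identity
\begin{equation*}
P_p(Z,Z) \;=\; \sum_{g\in G_{x_i}} (g,\wi P_p(\wi Z, g^{-1}\wi Z)),
\end{equation*}
where $\wi P_p$ is the smooth Bergman kernel on $\wi U_{x_i}$ (pulled back from a suitable extension) and $(g,\cdot)$ stands for the action of $g$ on the fibre of $L^p$ over $\wi Z$, which in our trivialisation is multiplication by $e^{i\theta_g p}$. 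The term $g=1$ is the on-diagonal contribution, governed by Theorem \ref{pbt4.12} and producing $\sum_r \pmb b_r(\wi Z) p^{-r}$; the non-identity terms are the new contributions that must be expanded.

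For fixed $g\neq 1$ I would decompose $\wi Z=(\wi Z_{1,g},\wi Z_{2,g})$ along $\wi U_{x_i}^g$ and its normal bundle and note that, in the chosen trivialisations, $g^{-1}\wi Z=(\wi Z_{1,g}, g^{-1}\wi Z_{2,g})$. Thus the quantity $\wi P_p(\wi Z, g^{-1}\wi Z)$ is an \emph{off-diagonal} value of the smooth Bergman kernel, with the two points separated in normal directions by the vector $(1-g^{-1})\wi Z_{2,g}$, which lies in $\ker(1-g^{-1})^{\perp}$. Applying the full near-diagonal expansion of $\wi P_p$ (the smooth case, e.g. \cite[Thm.~4.2.1]{MM07}) after rescaling $Z\mapsto Z/\sqrt p$, one obtains an expansion in powers of $p^{-1/2}$ whose leading model is the Bergman kernel of the Bargmann--Fock representation on $\C^n$. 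Evaluating that model kernel at points $(\wi Z_{1,g},\wi Z_{2,g})$ and $(\wi Z_{1,g}, g^{-1}\wi Z_{2,g})$ produces exactly the Gaussian $e^{-2\pi p\langle(1-g^{-1})\wi z_{2,g},\ov{\wi z_{2,g}}\rangle}$, while the higher order terms are of the form $\cK_{r,\wi Z_{1,g}}(\sqrt p \wi Z_{2,g})$ multiplied by this Gaussian.

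The degree and parity assertions for $\cK_{r,\wi Z_{1,g}}$ then follow from the general structure of the near-diagonal expansion: each order-$r$ coefficient is a polynomial of degree $\leqslant 3r$ in the rescaled variables, with the same parity as $r$, whose coefficients involve $R^{TX}$ and its derivatives up to order $r-2$ (see \cite[Thm.~4.1.24]{MM07}); the phase $e^{i\theta_g p}$ comes precisely from the pairing $(g,\cdot)$ on $L^p$, and $\cK_{0,\wi Z_{1,g}}=1$ from the fact that the leading model kernel is the Bargmann--Fock kernel. Summing over $g$ and combining with the identity contribution yields the main term in \eqref{apb4.20}.

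The remainder estimate relies on two ingredients: first, the off-diagonal exponential localisation of $P_p$ away from the diagonal on the cover, which gives a bound $\sim e^{-\sqrt{C_0 p}\, d(Z,X_{\sing})}$ whenever $\wi Z$ is outside a neighbourhood of the fixed-point set; and second, the Sobolev/rescaling estimates controlling the $\cC^l$-norm through the extra factor $p^{(l-1)/2}$. The main technical obstacle is to bookkeep uniformly in $g$ and $\wi Z_{1,g}$ the polynomial factors from the $k$-th order Taylor remainder against the Gaussian decay, in order to conclude that the error takes the stated form $p^{-k-1}+p^{-k+(l-1)/2}(1+\sqrt p\, d(Z,X_{\sing}))^N e^{-\sqrt{C_0 p}\,d(Z,X_{\sing})}$; this is done exactly as in the proof of \cite[Thm.~5.4.11]{MM07}, whose strategy I would follow.
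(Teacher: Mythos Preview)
The paper does not prove this theorem at all: it is simply quoted from \cite[(5.25)]{DLM06} and \cite[Theorem 5.4.11]{MM07} as a known input, and then \emph{used} in the proof of Theorem~\ref{pbt4.13}. So there is no ``paper's own proof'' to compare against.

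That said, your sketch is the correct outline of how the result is actually established in those references: one writes the orbifold Bergman kernel on the chart as $P_p(\wi Z,\wi Z)=\sum_{g\in G_{x_i}}(g,\wi P_p(\wi Z,g^{-1}\wi Z))$, treats the $g=1$ summand via the diagonal expansion (Theorem~\ref{pbt4.12}), and expands each $g\neq 1$ summand using the near-diagonal/off-diagonal expansion of the smooth Bergman kernel on the cover after the rescaling $Z\mapsto Z/\sqrt{p}$; the Bargmann--Fock model produces the Gaussian $e^{-2\pi p\langle(1-g^{-1})\wi z_{2,g},\ov{\wi z_{2,g}}\rangle}$, the phase $e^{i\theta_g p}$ comes from the $g$-action on $L^p$, and the structural facts about $\cK_r$ (degree $\leq 3r$, parity, curvature dependence, $\cK_0=1$) are inherited from the smooth expansion. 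Your description of the remainder estimate is also in line with the cited proofs. In short, nothing is wrong with your proposal; it is just a proof of a result the present paper takes for granted.
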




Given a function $f(p,x)$ in $p\in \N$ and $x\in X$, we write
$f=\mO_{\cC^{j}}(p^{l})$ if the $\cC^{j}$-norm of $f$ is uniformly
bounded by $C\,p^{l}$.

\begin{thm}\label{pbt4.13} Let $(X,\om)$ be a compact $n$-dimensional
K\"ahler orbifold with cyclic quotient singularities
(i.e., the stabilizer group $G_{x}$ is a cyclic group for any $x\in X$),
and $L$ be a proper orbifold line bundle on $X$ equipped 
with a Hermitian metric $h^L$
   whose curvature form is $-2\pi \sqrt{-1}\, \om$, such that for any $x\in
   X$, the stabilizer group $G_{x}$ acts on $L_{\wi{x}}$ as
   $\Z_{|G_{x}|}$-order cyclic group.
Fix $ N\geq 0$, and $r\geq 0$ and suppose $c_{i}$ are a finite
number of positive constants chosen so that
if $X$ has an orbifold point of order $m$ then
\begin{align}\label{ross1.1}
\frac{1}{m}\sum_{i} i^k c_{i}= \sum_{i\equiv u \, mod \,m} i^{k}c_{i}
\quad \text{for all $u$ and all  } k=0, \cdots, N+r.
\end{align}
Then the function
\begin{align}\label{ross1.2}
B^{\orb}_{p}(x): = \sum_{i} c_{i} P_{p+i}(x,x).
\end{align}
admits a global $\cC^{2r}$-expansion of order $N$. That is,
there exist smooth functions
$b_{0},\cdots, b_{N}$ on $X$ such that
\begin{align}\label{ross1.4}
   B^{\orb}_{p}= \sum_{j=0}^{N} b_{j} p^{n-j} +  \mO_{\cC^{2r}}(p^{n-N-1}).
\end{align}
Furthermore, $b_{j}$ are universal polynomials in the constants
$c_{i}$ and the derivatives of $\om$; in particular
\begin{align}\label{ross1.5}
b_{0}=\sum_{i}c_{i},\quad b_{1}=\sum_{i} c_{i}\Big(n\,  i
+ \frac{1}{8\pi}r^{X}\Big).
\end{align}
\end{thm}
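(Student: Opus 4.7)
The plan is to substitute the local expansion of Theorem \ref{pbt4.120} into $B^{\orb}_{p}(x)=\sum_{i}c_{i}P_{p+i}(x,x)$, isolate the smooth part from the ``anomalous'' terms carrying the phases $e^{i\theta_{g}(p+i)}$ with $1\neq g\in G_{x}$, and show that the latter cancel to the required order by means of \eqref{ross1.1}, leaving the former to reassemble into a polynomial in $p$.

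The Ross--Thomas hypothesis is most usefully restated in Fourier-dual form. Since the stabilizer $G_{x}$ acts faithfully on the fiber $L_{\widetilde{x}}$, the characters $g\mapsto e^{i\theta_{g}}$ enumerate all $|G_{x}|$-th roots of unity, and \eqref{ross1.1} is equivalent to
\begin{equation*}
\sum_{i}c_{i}\,e^{i\theta_{g}i}\,i^{k}=0\qquad(\,1\neq g\in G_{x},\ 0\leqslant k\leqslant N+r\,).
\end{equation*}
Fix $k=k(N,r)$ large enough that the pointwise remainder in \eqref{apb4.20} (with $l=2r$), summed against $c_{i}$, is $\mO_{\cC^{2r}}(p^{n-N-1})$. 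The smooth contribution $\sum_{s}\pmb{b}_{s}(\widetilde Z)\sum_{i}c_{i}(p+i)^{n-s}$ is expanded binomially and regrouped by powers of $p$, yielding
\begin{equation*}
b_{j}(\widetilde Z)=\sum_{s=0}^{j}\binom{n-s}{j-s}\,\pmb{b}_{s}(\widetilde Z)\,\sum_{i}c_{i}\,i^{j-s},
\end{equation*}
from which \eqref{bk2.7} immediately gives $b_{0}=\sum_{i}c_{i}$ and $b_{1}=\sum_{i}c_{i}(ni+\tfrac{r^{X}}{8\pi})$.

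For each $1\neq g$ the singular contribution factors as $e^{i\theta_{g}p}\sum_{i}c_{i}\,e^{i\theta_{g}i}F_{s,\widetilde Z}(p+i)$ with $F_{s,\widetilde Z}(q)=q^{n-s/2}\cK_{s,\widetilde Z_{1,g}}(\sqrt q\,\widetilde Z_{2,g})e^{-2\pi q\,\alpha_{g}(\widetilde Z)}$ and $\alpha_{g}(\widetilde Z)=\langle(1-g^{-1})\widetilde z_{2,g},\overline{\widetilde z_{2,g}}\rangle\geqslant 0$. Taylor-expanding $F_{s,\widetilde Z}(p+i)$ in $i$ about $p$ and invoking the Fourier cancellation above annihilates the first $N+r+1$ coefficients, leaving a tail controlled by the Gaussian estimates $|\alpha_{g}^{k}e^{-2\pi p\alpha_{g}}|\leqslant Cp^{-k}$ and $|\widetilde Z_{2,g}|^{|J|}e^{-cp|\widetilde Z_{2,g}|^{2}}\leqslant Cp^{-|J|/2}$; the upshot is a pointwise bound $\mO(p^{n-N-r-1})$ for the singular part of $B^{\orb}_{p}$.

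The principal technical step, and the main obstacle, is the $\cC^{2r}$-upgrade. Differentiating the Gaussian factor $e^{-2\pi p\,\alpha_{g}(\widetilde Z)}$ (or $\cK_{s,\widetilde Z_{1,g}}(\sqrt p\,\widetilde Z_{2,g})$) in $\widetilde Z$ contributes at most a factor $\sqrt p$ per derivative, via the natural scaling $\sqrt p\,\widetilde Z_{2,g}$; $2r$ derivatives therefore inflate the above pointwise estimate by $p^{r}$, producing exactly $\mO_{\cC^{2r}}(p^{n-N-1})$. This is precisely why \eqref{ross1.1} is imposed up to order $N+r$ rather than $N$. Outside a neighborhood of $X_{\sing}$, Theorem \ref{pbt4.12} gives an $\mO_{\cC^{2r}}(p^{-\infty})$ smooth expansion, so the locally constructed functions $b_{j}(\widetilde Z)$ patch unambiguously (since $\pmb{b}_{s}$ and the moments $\sum_{i}c_{i}i^{j-s}$ are intrinsic) into a global $b_{j}\in\cC^{\infty}(X)$, yielding \eqref{ross1.4}.
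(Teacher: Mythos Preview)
Your outline is correct and shares the paper's overall architecture: plug the local expansion \eqref{apb4.20} into $B^{\orb}_{p}$, separate the smooth sum $\sum_{s}\pmb{b}_{s}\sum_{i}c_{i}(p+i)^{n-s}$ from the anomalous terms carrying the phases $e^{i\theta_{g}(p+i)}$, and kill the latter with the Ross--Thomas condition. Your binomial reassembly of the smooth part and the computation of $b_{0},b_{1}$ match the paper exactly.

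The one genuine methodological difference is in how the anomalous terms are controlled. You lump the polynomial and Gaussian into a single $F_{s,\wi Z}(q)$ and Taylor-expand in $q$ at $q=p$, so that the first $N+r+1$ Taylor coefficients are annihilated by the Fourier-dual identities $\sum_{i}c_{i}e^{i\theta_{g}i}i^{k}=0$, and the integral remainder is bounded by $|F_{s}^{(N+r+1)}(\xi)|\leqslant C\,\xi^{\,n-s/2-N-r-1}$. The paper instead keeps the Gaussian factor $\eta_{u}^{p+i}=e^{-2\pi(p+i)\alpha}$ with the phase $\lambda^{ui}$ and the weights $c_{i}$, expands the remaining polynomial part of $(p+i)$ binomially, and reduces everything to estimating
\[
w_{l,p}=\sum_{i}c_{i}\,i^{l}\lambda^{ui}\eta_{u}^{\,p+i}
=\Big[\tfrac{\sum_{i}c_{i}i^{l}\lambda^{ui}\eta_{u}^{\,i}}{(\eta_{u}-1)^{N+r-l+1}}\Big](\eta_{u}-1)^{N+r-l+1}\eta_{u}^{\,p},
\]
invoking \cite[Lemma~3.5]{RT09a} to see that the bracket is bounded (the polynomial $\eta\mapsto\sum_{i}c_{i}i^{l}\lambda^{ui}\eta^{i}$ has a zero of order $N+r-l+1$ at $\eta=1$), together with $(\eta_{u}-1)^{s}\eta_{u}^{p/4}=\mO(p^{-s})$. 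For the $\cC^{2r}$ upgrade the paper then uses the recursion $dw_{l,p}=\tfrac{d\eta_{u}}{\eta_{u}}(pw_{l,p}+w_{l+1,p})$ and induction, rather than your direct ``each $\wi Z$-derivative costs $\sqrt{p}$'' count. Both routes rest on the same Fourier-dual reformulation of \eqref{ross1.1}; the paper's factorisation is slightly cleaner and cites Ross--Thomas' lemma, while your Taylor-remainder argument is more self-contained but would benefit from writing the remainder in integral form so that the $\wi Z$-differentiation is manifestly under control.
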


\begin{rem}\label{pbt4.14}
Theorem \ref{pbt4.13} recovers
\cite[Theorem 1.7]{RT09a} of Ross-Thomas, where 
the remainder estimate is $\mO_{\cC^{r}}(p^{n-N-1})$.

We improve here their remainder
estimate to $\mO_{\cC^{2r}}(p^{n-N-1})$ and we get Theorem
\ref{pbt4.13} directly from Theorems \ref{pbt4.12}, \ref{pbt4.120}.
\end{rem}

\begin{rem}\label{pbt4.15}
By Ma-Marinescu \cite[(3.30), Remark
3.10]{MM08a}, \cite[Theorem 4.1.3, Remark 5.4.13]{MM07},
Theorem \ref{pbt4.13} generalizes to
any $J$-invariant metric $g^{TX}$ on $TX$. Set $\Theta:= g^{TX}(J\cdot, \cdot)$.
The only change is that the
coefficients in the expansion become
\begin{align}\label{eq:}\begin{split}
&b_{0}= \frac{\om^{n}}{\Theta^{n}}\sum_{i}c_{i},\, \,
b_{1}=  \frac{\om^{n}}{\Theta^{n}}\sum_{i}c_{i}\left[
n\, i +\frac{r^{X}_{\om}}{8\pi} 
 -\frac{1}{4\pi} \Delta_{\om} \log\left(\frac{\om^{n}}{\Theta^{n}}\right)\right],
\end{split}\end{align}
where  $r^X_\om$, $\Delta_\om$ are the scalar curvature and
the Bochner Laplacian associated  to
$g^{TX}_{\om}= \om(\cdot,J\cdot)$.
Moreover, \eqref{ross1.4} can be taken to be uniform as
$(h^{L}, g^{TX})$ runs
over a compact set.
\end{rem}


\begin{proof}[Proof of Theorem \ref{pbt4.13}]
Recall that now $G$ is a cyclic group of order $m$.
Let $\zeta$ be a generator of $G$. From the local condition for orbi-ample line bundles,
$\zeta$ acts on $L_{x_{i}}$ as a primitive $m$-th root of unity $\lambda$. Thus in  \eqref{apb4.20},
$e^{i\theta_{\zeta^{u}}}= \lambda^{u}$.
For  $u\in \{1,\cdots, m-1\}$, set
\begin{align}\label{ross1.7}
   \eta_u=& e^{-2\pi \langle(1-\zeta^{-u})\wi{z}_{2,\zeta^{u}},
   \ov{\wi{z}}_{2,\zeta^{u}}\rangle},\\
S_{u}(\wi{Z}) =& \sum_{i} c_{i} \sum_{j=0}^{2N+2r+1} (p+i)^{n-\frac{j}{2}}
\cK_{j,\wi{Z}_{1,\zeta^{u}}}(\sqrt{p+i}\wi{Z}_{2,\zeta^{u}})  
\lambda^{u (p+i)}  \eta_u^{p+i},\nonumber\\
\mS_{2}=& \sum_{u=1}^{m-1} S_{u},\qquad
\mS_{1}= \sum_{i} c_{i}
\sum_{j=0}^{N+r} \pmb{b}_j(\wi{Z}) (p+i)^{n-j}. \nonumber
\end{align}
Here $Z=z+\overline{z}$, and $z=\sum_i z_i\tfrac{\partial}{\partial z_i}$,
$\overline{z}= \sum_i\overline{z}_i\tfrac{\partial}{\partial\overline{z}_i}$
when we consider them as vector fields, and
$\Big\lvert\tfrac{\partial}{\partial z_i}\Big\rvert^2=
\Big\lvert\tfrac{\partial}{\partial\overline{z}_i}\Big\rvert^2
=\dfrac{1}{2}$. Similarly for $\wi{Z}$ (and those with subscripts).

Applying \eqref{apb4.20} for $k=N+r+1$ we obtain for $|\wi{Z}|\leq\var/2$,
\begin{multline}\label{ross1.8}
   \left |B^{\orb}_{p}(\wi{Z})- \mS_{1}-\mS_{2}\right|_{\cC^{l}}\\
  \leqslant   C_{l} \, p^{n-N-r-2}\left(1 + p^{\frac{l+1}{2}}
   \left(1+\sqrt{p} d (Z,X_{\sing})\right)^M
   e^{- \sqrt{C_0\, p}\, d (Z,X_{\sing})}\right)\\
+ \sum_{i} c_{i}  (p+i)^{n-N-r-1} \left( \sum_{u=1}^{m-1}
\Big |\cK_{2N+2r+2,\wi{Z}_{1,\zeta^{u}}}(\sqrt{p+i}\wi{Z}_{2,\zeta^{u}})
\eta_{u}^{p+i}\Big|_{\cC^{l}}
+ \Big|\pmb{b}_{N+r+1}(\wi{Z})  \Big|_{\cC^{l}} \right).
\end{multline}
In what follows, we write  for simplicity $\wi{Z}_{1,\zeta^{u}}$ as
$Z_{1,u}$ and $\wi{Z}_{2,\zeta^{u}}$ as $Z_{2,u}$.
For a function $f(p,Z)$ with $p\in \N$ and $|Z|\leq \var/2$ we write
$f=\mO_{\cC^{j}}(g(p,Z))$ if the $\cC^{j}$-norm of $f$ in $Z$ can be
uniformly controlled by $C \, |g(p,Z)|$.

Note that $\cK_{j,Z_{1,u}}(Z_{2,u})$ is a polynomial in $Z_{2,u}$ with the same
parity as $j$ and $\deg \cK_{j, Z_{1,u}}\leq 3j$. Denote by
$\cK_{j, Z_{1,u}, l}$ the $l$-homogeneous part of
$\cK_{j,Z_{1,u}}$. Then $\cK_{j, Z_{1,u}, l}=0$ if $l$ and $j$
are not in the same parity or $l> 3j$. By (\ref{ross1.7}),
\begin{multline}\label{ross1.10}
S_{u}(Z) = \sum_{i} c_{i} \sum_{j=0}^{2N+2r+1} \sum_{l}
(p+i)^{n-\frac{j-l}{2}}
\cK_{j,Z_{1,u}, l }(Z_{2,u})  \lambda^{u (p+i)}  \eta_u^{p+i}\\
= \lambda^{u p}  \sum_{j=0}^{2N+2r+1}  \left\{   \Big(\sum_{l\geq j-2n}
\sum_{q=0}^{n-\frac{j-l}{2}}
+ \sum_{l< j-2n} \sum_{q=0}^{N+r}\Big)
\cK_{j,Z_{1,u}, l }(\sqrt{p}Z_{2,u})\right.\\
\times \left. \, p^{n-\frac{j}{2} -q}
\begin{pmatrix} n-\frac{j-l}{2} \\ q \end{pmatrix}
\sum_{i} c_{i} i^{q} \lambda^{u i} \eta_u^{p+i}\right\} +
\mO_{\cC^{2r}}(p^{n-N-1}).
\end{multline}
Here we used $(p+i)^{\gamma}= \sum_{q=0}^{N+r} p^{\gamma-q}
\begin{pmatrix} \gamma \\ q \end{pmatrix} i^{q} +
\mO(p^{\gamma-N-r-1})$  for $\gamma <0$ and
the following relations for  $r',r''\in \N$, $r''\leq l$,
\begin{align}\label{ross1.11}\begin{split}
&\cK_{j,Z_{1,u}, l }(\sqrt{p}Z_{2,u}) \eta_{u}^{p} =
\mO_{\cC^{r'}}(p^{\frac{r'}{2}}\eta_{u}^{p/2}),\\
&  \cK_{j,Z_{1,u}, l }(\sqrt{p}Z_{2,u})  =
\mO_{\cC^{r''}}(p^{\frac{l}{2}} |Z_{2,u}|^{l-r''}).
\end{split}\end{align}
In order to prove (\ref{ross1.4}) it is sufficient to show that
for $0\leq l \leq N+r$,  $r'\leq 2r$,
\begin{align}\label{ross1.12}
w_{l,p}:=\sum_{i} c_{i} i^{l} \lambda^{u i} \eta_u^{p+i}
= \mO_{\cC^{r'}}(p^{l-N-r-1 +\frac{r'}{2} }  \eta_u^{p/2}) .
\end{align}
In fact, we will prove that
$w_{l,p}= \mO_{\cC^{r'}}(p^{l-N-r-1 +\frac{r'}{2} }
\eta_u^{(\frac{3}{4}- \frac{r'}{8r})p}) $ for $r'\leq 2r$.

Since $dw_{l,p}= \frac{d\eta_u}{\eta_u} (p w_{l,p}+ w_{l+1,p})$, and
$\frac{d\eta_u}{\eta_u}$ has a term $z_{2,u}$ or $\ov{z}_{2,u}$ 
which can be absorbed by
$\eta_u^{ \frac{1}{8r}p} $ to get a factor $p^{-1/2}$, we see by
induction that it is sufficient to prove $w_{l,p}=
\mO_{\cC^{0}}(p^{l-N-r-1}  \eta_u^{\frac{3}{4} p})$. To this end,
write
\begin{align}\label{ross1.15}
   w_{l,p}= \left[\frac{\sum_{i} c_{i} i^{l} \lambda^{u i} \eta_u^{i} }
   {(\eta_u-1)^{N+r-l+1}} \right](\eta_u-1)^{N+r-l+1}\eta_u^{p}.
\end{align}
Since $\lambda$ is a primitive $m$-th root of unity, $\lambda^{u}\neq
1$ if $u\in \{1, \cdots,m-1\}$.
From \cite[Lemma 3.5]{RT09a}, under
the condition (\ref{ross1.1}), the function $\eta\to \sum_{i} c_{i}
i^{l} \lambda^{u i} \eta^{i}$ has a root of order $N+r-l+1$ at
$\eta=1$ and so the term in square brackets is bounded.

For $|z_{2,u}|\leq \var$, we have by \eqref{ross1.7},
   \begin{align}\label{ross1.17}
   |\eta_u-1 |\leq C |z_{2,u}|^2.
   \end{align}
   By using \eqref{ross1.7} and \eqref{ross1.17} and the fact that $[0,\infty)\ni x\mapsto x^{s}e^{-x}$ is bounded
   for any $s\geq 0$, we get
   \begin{align}\label{ross1.18}
(\eta_u-1)^{s}\eta_u^{p/4}  = \mO(p^{-s}) \quad \text{ for }  s\geq 1.
   \end{align}

Thus, $w_{l,p}= \mO_{\cC^{0}}(p^{l-N-r-1} \eta_u^{\frac{3}{4} p})$
and (\ref{ross1.12}) follows.

Back in \eqref{ross1.10}, for $q> N+r$, the
corresponding contribution is certainly $p^{n-\frac{j}{2} -q}  \cdot
\mO_{\cC^{2r}}(p^{r}  \eta_u^{p/2}) =  \mO_{\cC^{2r}}(p^{n-N-1}
\eta_u^{p/2})$, by \eqref{ross1.11}. 
On the other hand, if $0\leq q\leq N+r$, then,
by \eqref{ross1.11} and \eqref{ross1.12},
the corresponding contribution is $p^{n-\frac{j}{2} -q}  \cdot
\mO_{\cC^{2r}}(p^{q-N-r-1 + r } (1+\sqrt{p}|Z_{2,u}|)^{6N+6r+3}\eta_u^{p/2}) =
\mO_{\cC^{2r}}(p^{n-N-1}  \eta_u^{p/4}) $ again.
Thus  $S_{u}= \mO_{\cC^{2r}}(p^{n-N-1}) $.

From  \eqref{ross1.7} and the above
argument, $\mS_{2}= \mO_{\cC^{2r}}(p^{n-N-1}) $. Combining with
\eqref{ross1.7}, \eqref{ross1.8} and \eqref{ross1.11},
we get \eqref{ross1.4} and \eqref{ross1.5}.
\end{proof}


\begin{thebibliography}{10}

\bibitem{DLM06}
X.~Dai, K.~Liu, and X.~Ma, On the asymptotic expansion of
{B}ergman kernel, J. Differential Geom. 72(2006), 1--\,41.

\bibitem{MM07}
X.~Ma and G.~Marinescu, Holomorphic {M}orse inequalities and {B}ergman
 kernels, Progress in Mathematics, vol. 254, Birkh{\"a}user Boston Inc.,
 Boston, MA, 2007, 422 pp.

\bibitem{MM08a}
X.~Ma and G.~Marinescu, Generalized {B}ergman kernels on symplectic manifolds,
Adv. Math. 217 (2008), no.~4, 1756--1815.

\bibitem{RT09a}
J. Ross and R. Thomas,  Weighted Bergman kernels on orbifolds.
arXiv:0907.5215, J. Differential Geom. 88(2011), 87--\,108.

\bibitem{RT09b}
J. Ross and R. Thomas,  Weighted projective embeddings, stability of
orbifolds, and constant scalar curvature K\"ahler metrics, J.
Differential Geom. 88(2011), 109--\,160.


\end{thebibliography}
\end{document}